\theoremstyle{definition}
\newtheorem{theorem}{Theorem}
\newtheorem{lemma}[theorem]{Lemma}
\newtheorem{definition}[theorem]{Definition}
\newcommand{\closure}[1]{\left\langle #1 \right\rangle}
\newcommand{\B}{\operatorname{B}}
\newcommand{\Bin}{\operatorname{Bin}}
\title{Lower bounds for bootstrap percolation on
  {G}alton--{W}atson trees}
\author{Karen Gunderson\thanks{Heilbronn Institute for Mathematical Research, School of Mathematics, University of Bristol, Bristol BS8 1TW, UK.} \:and Micha{\l} Przykucki\thanks{Department of Pure Mathematics and Mathematical Statistics, University of Cambridge, Wilberforce Road, Cambridge CB3 0WB, UK, and London Institute for Mathematical Sciences, 35a South St, Mayfair, London W1K 2XF, UK. Supported in part by MULTIPLEX no.\ 317532.}}
\date{12 February 2014}
\begin{document}

\maketitle

\begin{abstract}
\small
Bootstrap percolation is a cellular automaton modelling the spread of an `infection' on a graph. In this note, we prove a family of lower bounds on the critical probability for $r$-neighbour bootstrap percolation on Galton--Watson trees in terms of moments of the offspring distributions. With this result we confirm a conjecture of Bollob{\'a}s, Gunderson, Holmgren, Janson and Przykucki. We also show that these bounds are best possible up to positive constants not depending on the offspring distribution.
\end{abstract}

\small

{\bf \noindent AMS subject classifications}: Primary 05C05, 60K35, 60C05, 60J80; secondary 05C80.

{\bf \noindent Keywords and phrases:} bootstrap percolation; Galton--Watson trees.

\normalsize

\section{Introduction}

Bootstrap percolation, a type of cellular automaton, was introduced by Chalupa, Leath and Reich \cite{bootstrapbethe} and has been used to model a number of physical processes.  Given a graph $G$ and threshold $r \geq 2$, the \emph{$r$-neighbour bootstrap process} on $G$ is defined as follows: Given $A \subseteq V(G)$, set $A_0 = A$ and for each $t \geq 1$, define
\[
A_t = A_{t-1} \cup \{v \in V(G) :\ |N(v) \cap A_{t-1}| \geq r\},
\] 
where $N(v)$ is the neighbourhood of $v$ in $G$.  The closure of a set $A$ is $\closure{A} = \bigcup_{t \geq 0} A_t$.  Often the bootstrap process is thought of as the spread, in discrete time steps, of an `infection' on a graph.  Vertices are in one of two states: `infected' or `healthy' and a vertex with at least $r$ infected neighbours becomes itself infected, if it was not already, at the next time step.  For each $t$, the set $A_t$ is the set of infected vertices at time $t$.   A set $A \subseteq V(G)$ of initially infected vertices is said to \emph{percolate} if $\closure{A} = V(G)$.

Usually, the behaviour of bootstrap processes is studied in the case where the initially infected vertices, i.e., the set $A$, are chosen independently at random with a fixed probability $p$.  For an infinite graph $G$ the \emph{critical probability} is defined by
\begin{equation*}
p_c(G, r) = \inf\{p :\ \mathbb{P}_p(\closure{A} = V(G))>0\}.
\end{equation*} 
This is different from the usual definition of critical probability for finite graphs, which is generally defined as the infimum of the values of $p$ for which percolation is more likely to occur than not.

In this paper, we consider bootstrap percolation on Galton--Watson trees and answer a conjecture in \cite{bootsGW} on lower bounds for their critical probabilities. For any offspring distribution $\xi$ on $\mathbb{N} \cup \{0\}$, let $T_{\xi}$ denote a random Galton--Watson tree with offspring distribution $\xi$.  For any fixed offspring distribution $\xi$, the critical probability $p_c(T_\xi, r)$ is almost surely a constant (see Lemma 3.2 in \cite{bootsGW}) and we shall give lower bounds on the critical probability in terms of various moments of $\xi$.

Bootstrap processes on infinite regular trees were first considered by Chalupa, Leath and Reich \cite{bootstrapbethe}. Later,  Balogh, Peres and Pete \cite{infiniteTrees} studied bootstrap percolation on arbitrary infinite trees and one particular example of a random tree given by a Galton--Watson branching process.  In \cite{bootsGW}, Galton--Watson branching processes were further considered, and it was shown that for every $r \geq 2$, there is a constant $c_r > 0$ so that 
\begin{equation*}
p_c(T_\xi, r) \geq \frac{c_r}{\mathbb{E}[\xi]}\exp\left(-\frac{\mathbb{E}[\xi]}{r-1} \right)
\end{equation*}
and in addition, for every $\alpha \in (0,1]$, there is a positive constant $c_{r, \alpha}$ so that, 
\begin{equation}\label{eq:alpha}
p_c(T_\xi, r) \geq c_{r, \alpha} \left(\mathbb{E}[\xi^{1+\alpha}] \right)^{-1/\alpha}.
\end{equation}
Additionally, in \cite{bootsGW} it was conjectured that for any $r \geq 2$, inequality \eqref{eq:alpha} holds for any $\alpha \in (0, r-1]$. As our main result, we show that this conjecture is true. For the proofs to come, some notation from \cite{bootsGW} is used. If an offspring distribution $\xi$ is such that $\mathbb{P}(\xi < r) > 0$, then one can easily show that $p_c(T_\xi, r) = 1$. With this in mind, for $r$-neighbour bootstrap percolation, we only consider offspring distributions with $\xi \geq r$ almost surely.

\begin{definition}\label{def:G}
For every $r \geq 2$ and $k \geq r$, define
\[
g_k^r(x) = \frac{\mathbb{P}(\Bin(k, 1-x) \leq r-1)}{x} = \sum_{i=0}^{r-1} \binom{k}{i} x^{k-i-1}(1-x)^i
\]
and for any offspring distribution $\xi$ with $\xi \geq r$ almost surely, define
\[
G_{\xi}^r(x) = \sum_{k \geq r} \mathbb{P}(\xi = k) g_k^r(x).
\]
\end{definition} 

Some facts, which can be proved by induction, about these functions are used in the proofs to come. For any $r \geq 2$, we have $g_r^r(x) = \sum_{i=0}^{r-1} (1-x)^i$ and for any $k > r$,
\begin{equation}\label{eq:gk_rec}
g_r^r(x) - g_k^r(x) = \sum_{i=r}^{k-1} \binom{i}{r-1} x^{i-r}(1-x)^r.
\end{equation}
Hence, for all distributions $\xi$ we have $G_{\xi}^r(x) \leq g_r^r(x)$ for $x \in [0,1]$. 

Developing a formulation given by Balogh, Peres and Pete \cite{infiniteTrees}, it was shown in \cite{bootsGW} (see Theorem 3.6 in \cite{bootsGW}) that if $\xi \geq r$, then
\begin{equation}\label{eq:pc_M}
p_c(T_\xi, r) = 1 - \frac{1}{\max_{x \in [0,1]} G_{\xi}^r(x)}.
\end{equation}

\section{Results}

In this section, we shall prove a family of lower bounds on the critical probability $p_c(T_{\xi}, r)$ based on the $(1+\alpha)$-moments of the offspring distributions $\xi$ for all $\alpha \in (0,r-1]$, using a modification of the proofs of Lemmas 3.7 and 3.8 in \cite{bootsGW} together with some properties of the gamma function and the beta function.  

Recall that the gamma function is given, for $z$ with $\Re(z) > 0$, by $\Gamma(z) = \int_0^{\infty} t^{z-1} e^{-t}\ dt$ and for all $n \in \mathbb{N}$, satisfies $\Gamma(n) = (n-1)!$.  The beta function is given, for $\Re(x), \Re(y)>0$, by $\B(x,y) = \int_0^1 t^{x-1}(1-t)^{y-1}\ dt$ and satisfies $\B(x,y) = \frac{\Gamma(x)\Gamma(y)}{\Gamma(x+y)}$. We shall use the following bounds on the ratio of two values of the gamma function obtained by Gautschi \cite{gammainequalities}. For $n \in \mathbb{N}$ and $0 \leq s \leq 1$ we have
\begin{equation}
\label{eq:gammasIneq}
 \left( \frac{1}{n+1} \right)^{1-s} \leq \frac{\Gamma(n+s)}{\Gamma(n+1)} \leq \left( \frac{1}{n} \right)^{1-s}.
\end{equation}
Let us now state our main result.

\begin{theorem}\label{thm:main}
 For each $r \geq 2$ and $\alpha \in (0, r-1]$, there exists a constant $c_{r, \alpha}>0$ such that for any offspring distribution $\xi$ with $\mathbb{E}[\xi^{1+\alpha}] < \infty$, we have
\[
 p_c(T_{\xi},r) \geq c_{r,\alpha} \left( \mathbb{E} \left [\xi^{1+\alpha} \right ] \right)^{-1/\alpha}.
\]
\end{theorem}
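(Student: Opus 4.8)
The plan is to exploit the exact formula \eqref{eq:pc_M}, which reduces the task to exhibiting a single point at which $G_\xi^r$ exceeds $1$ by a controlled amount. Write $m=\mathbb{E}[\xi^{1+\alpha}]$ and $M=\max_{x\in[0,1]}G_\xi^r(x)$, so that $p_c(T_\xi,r)=1-1/M$; since $g_k^r(1)=1$ for every $k$, we have $G_\xi^r(1)=1$ and hence $M\ge 1$. If we can produce $x^\star\in[0,1]$ with
\[
 G_\xi^r(x^\star)\ \ge\ 1+c'_{r,\alpha}\,m^{-1/\alpha}
\]
for a constant $c'_{r,\alpha}>0$, then $p_c(T_\xi,r)\ge \frac{c'_{r,\alpha}m^{-1/\alpha}}{1+c'_{r,\alpha}m^{-1/\alpha}}$, and because $\xi\ge r$ forces $m\ge r^{1+\alpha}$, the denominator is bounded above by a constant depending only on $r$ and $\alpha$, which turns this into the asserted inequality. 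So everything comes down to a good lower bound for $\max_x G_\xi^r(x)$.

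To find $x^\star$ I would set $x=1-y$ with $y$ small and use \eqref{eq:gk_rec} together with $g_r^r(1-y)=\sum_{i=0}^{r-1}y^i$ to write
\[
 G_\xi^r(1-y)-1\ =\ \sum_{i=1}^{r-1}y^i\ -\ \sum_{k\ge r}\mathbb{P}(\xi=k)\bigl(g_r^r(1-y)-g_k^r(1-y)\bigr),
\]
where the first sum is at least $y$. The crux is the upper estimate
\[
 g_r^r(1-y)-g_k^r(1-y)\ \le\ C_r\,(ky)^{1+\alpha}\qquad\text{for all }k\ge r,\ y\in[0,1],
\]
with $C_r$ depending only on $r$; this is the only step that uses the hypothesis $\alpha\le r-1$. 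By \eqref{eq:gk_rec}, $g_r^r(1-y)-g_k^r(1-y)=y^r\sum_{i=r}^{k-1}\binom{i}{r-1}(1-y)^{i-r}$. When $ky\le 1$, drop the factors $(1-y)^{i-r}\le 1$, apply the hockey-stick identity $\sum_{i=r}^{k-1}\binom{i}{r-1}=\binom{k}{r}-1\le k^r/r!$, and use that $ky\le 1$ with $r\ge 1+\alpha$ gives $(ky)^r\le(ky)^{1+\alpha}$; when $ky>1$, bound the difference crudely by $g_r^r(1-y)=\sum_{i=0}^{r-1}y^i\le r\le r\,(ky)^{1+\alpha}$. Alternatively, the same estimate can be packaged through the incomplete-beta representation of the binomial tail $\mathbb{P}(\Bin(k,y)\ge r)$ and the Gautschi bounds \eqref{eq:gammasIneq}, which is presumably how it is organised in the modification of Lemmas~3.7 and~3.8 of \cite{bootsGW}.

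Granting the key estimate, summing it against the law of $\xi$ gives
\[
 G_\xi^r(1-y)-1\ \ge\ y-C_r\,y^{1+\alpha}\,\mathbb{E}[\xi^{1+\alpha}]\ =\ y-C_r\,m\,y^{1+\alpha},
\]
and I would then maximise the right-hand side over $y$. The maximiser is $y^\star=\bigl((1+\alpha)C_r\,m\bigr)^{-1/\alpha}$, which lies in $(0,1]$ because $m\ge r^{1+\alpha}$, and it yields
\[
 G_\xi^r(1-y^\star)-1\ \ge\ \frac{\alpha}{1+\alpha}\,y^\star\ =\ \frac{\alpha}{1+\alpha}\bigl((1+\alpha)C_r\bigr)^{-1/\alpha}m^{-1/\alpha},
\]
so $x^\star=1-y^\star$ does the job with $c'_{r,\alpha}=\frac{\alpha}{1+\alpha}\bigl((1+\alpha)C_r\bigr)^{-1/\alpha}$, and the first paragraph converts this to the theorem. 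The main obstacle is the key estimate on $g_r^r-g_k^r$: one must make sure the exponent appearing there is $1+\alpha$ rather than $r$ (which is exactly what fails once $\alpha>r-1$), and it is in controlling that difference cleanly that the gamma- and beta-function facts recorded before the theorem come into play; the rest is a one-variable optimisation and some bookkeeping of constants depending on $r$ and $\alpha$.
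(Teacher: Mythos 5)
Your proposal is correct, but it takes a genuinely different and more elementary route than the paper. The paper splits the range: for $\alpha\in(0,r-1)$ it studies the integral $\int_0^1 \bigl(g_r^r(x)-G_\xi^r(x)\bigr)(1-x)^{-(2+\alpha)}\,dx$, bounding it above term by term via the beta function and Gautschi's inequality \eqref{eq:gammasIneq}, and below by integrating only over $[0,1-y]$ where $y$ is chosen so that $g_r^r(1-y)$ equals the maximum of $G_\xi^r$, finishing with a two-case analysis in $y$; the endpoint $\alpha=r-1$ must then be treated separately (the constant $c_1(r,\alpha)$ blows up as $\alpha\to(r-1)^-$) using the pointwise tail bound $\mathbb{P}(\Bin(k,t)\ge r)\le k^r t^r/r!$ evaluated at one well-chosen $t_0$. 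You instead prove the single pointwise estimate $g_r^r(1-y)-g_k^r(1-y)\le C_r(ky)^{1+\alpha}$ --- via \eqref{eq:gk_rec} and the hockey-stick identity when $ky\le 1$ (this is exactly where $\alpha\le r-1$ enters, through $(ky)^r\le(ky)^{1+\alpha}$), and the trivial bound $g_r^r(1-y)\le r$ together with $g_k^r\ge 0$ when $ky>1$ --- and then optimise $y-C_r\,\mathbb{E}[\xi^{1+\alpha}]\,y^{1+\alpha}$ in one variable; your checks that $y^\star\le 1$ and that the final denominator is bounded are sound under the paper's standing assumption $\xi\ge r$ a.s.\ (which is needed anyway for \eqref{eq:pc_M}). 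In effect you have interpolated the paper's Lemma \ref{lem:r-1} argument so that it covers all of $\alpha\in(0,r-1]$ at once, with no gamma/beta machinery, no need to locate the maximiser of $G_\xi^r$, and no separate endpoint case; what you give up is only the asymptotically sharp constant $(1-1/r)\bigl((r-1)!\bigr)^{1/(r-1)}$ that the paper obtains at $\alpha=r-1$, which Theorem \ref{thm:main} does not require.
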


We prove Theorem \ref{thm:main} in two steps. First, in Lemma \ref{lem:openInterval}, we show that it holds for $\alpha \in (0, r-1)$. Then, in Lemma \ref{lem:r-1}, we consider the case $\alpha = r-1$.

\begin{lemma}\label{lem:openInterval}
 For all $r \geq 2$ and $\alpha \in (0, r-1)$, there exists a positive constant $c_{r, \alpha}$ such that for any distribution $\xi$ with $\mathbb{E}[\xi^{1+\alpha}] < \infty$, we have
\[
 p_c(T_{\xi},r) \geq c_{r,\alpha} \left( \mathbb{E} \left [\xi^{1+\alpha} \right ] \right)^{-1/\alpha}.
\]
\end{lemma}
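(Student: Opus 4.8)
The plan is to use the exact formula \eqref{eq:pc_M}. Write $m=\mathbb{E}[\xi^{1+\alpha}]$ and $M=\max_{x\in[0,1]}G_\xi^r(x)$, so that $p_c(T_\xi,r)=1-1/M$, an increasing function of $M$; hence it suffices to \emph{lower} bound $M$ by exhibiting a single point at which $G_\xi^r$ is at least $1+c\,m^{-1/\alpha}$ for a constant $c$ depending only on $\alpha$. After substituting $x=1-y$, Definition~\ref{def:G} gives $g_k^r(1-y)=\mathbb{P}(\Bin(k,y)\le r-1)/(1-y)$, so mixing over $\xi$ yields
\[
G_\xi^r(1-y)=\frac{\mathbb{P}(\Bin(\xi,y)\le r-1)}{1-y}=\frac{1-\mathbb{P}(\Bin(\xi,y)\ge r)}{1-y},
\]
where $\Bin(\xi,y)$ denotes the mixed binomial (first sample $\xi$, then $\Bin(\xi,y)$). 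Since $(1-q)/(1-y)\ge 1+y/2$ whenever $q\le y/2$, the whole lemma reduces to proving $\mathbb{P}(\Bin(\xi,y)\ge r)\le y/2$ for a well-chosen $y$ of order $m^{-1/\alpha}$.

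For that tail estimate I would condition on $\xi=k$, apply the union bound $\mathbb{P}(\Bin(k,y)\ge r)\le\binom{k}{r}y^r$, and truncate: split $\mathbb{E}\bigl[\min\{\binom{\xi}{r}y^r,1\}\bigr]$ according to whether $\xi\le K$ or $\xi>K$, with $K=1/y$. On $\{\xi\le K\}$ I bound $\binom{\xi}{r}\le\xi^r/r!$ and interpolate $\xi^r\mathbf 1\{\xi\le K\}\le K^{r-1-\alpha}\xi^{1+\alpha}$ — this is where $\alpha<r-1$ is used, so that the exponent $r-1-\alpha$ is nonnegative — giving a contribution at most $\tfrac{1}{r!}y^{1+\alpha}m$; on $\{\xi>K\}$ Markov's inequality gives $\mathbb{P}(\xi>K)\le m/K^{1+\alpha}=y^{1+\alpha}m$. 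Altogether $\mathbb{P}(\Bin(\xi,y)\ge r)\le(1+\tfrac1{r!})y^{1+\alpha}m\le\tfrac32 y^{1+\alpha}m$.

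To finish, note that $\xi\ge r$ a.s.\ forces $m\ge r^{1+\alpha}$, so $y:=(3m)^{-1/\alpha}\in(0,1)$; with this $y$ the bound above reads $\mathbb{P}(\Bin(\xi,y)\ge r)\le y/2$, hence $M\ge G_\xi^r(1-y)\ge 1+y/2$. Setting $\delta=y/2$ (which is at most $\tfrac12$ since $3m\ge 6$) and using $p_c=1-1/M\ge\delta/(1+\delta)\ge\tfrac23\delta$ gives $p_c(T_\xi,r)\ge\tfrac13\,3^{-1/\alpha}m^{-1/\alpha}$, i.e.\ the assertion with $c_{r,\alpha}=3^{-1/\alpha}/3$.

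The main obstacle is exactly the tail bound: because $1+\alpha\le r$, the $r$-th moment of $\xi$ may be infinite, so the one-line estimate $\mathbb{P}(\Bin(\xi,y)\ge r)\le\mathbb{E}[\binom{\xi}{r}]y^r$ is unavailable, and one must truncate at a level $K\asymp 1/y$ chosen to balance the main term $\asymp y^r K^{r-1-\alpha}m$ against the truncation loss $\asymp m/K^{1+\alpha}$; keeping $r-1-\alpha$ nonnegative in the interpolation step is precisely why the endpoint $\alpha=r-1$ is excluded and deferred to Lemma~\ref{lem:r-1}. (Alternatively one could estimate $g_k^r$ termwise through the incomplete-beta identity $\mathbb{P}(\Bin(k,y)\le r-1)=I_{1-y}(k-r+1,r)$ and Gautschi's inequality \eqref{eq:gammasIneq}, but the truncation argument above seems shorter.)
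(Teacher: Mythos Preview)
Your argument is correct, and it takes a genuinely different route from the paper's. The paper fixes $y$ by the implicit relation $g_r^r(1-y)=M$ and then sandwiches the integral
\[
\int_0^1 \frac{g_r^r(x)-G_\xi^r(x)}{(1-x)^{2+\alpha}}\,dx
\]
between an upper bound $r c_1\,\mathbb{E}[\xi^{1+\alpha}]$ (obtained term-by-term via the beta function identity and Gautschi's inequality \eqref{eq:gammasIneq}) and a lower bound of order $y^{-\alpha}$; solving for $y$ and invoking \eqref{eq:p_cBoundx} gives the result. You instead pick $y=(3m)^{-1/\alpha}$ in advance and bound $G_\xi^r(1-y)$ from below directly, replacing the integral machinery by the elementary truncation $\mathbb{E}\bigl[\min\{\binom{\xi}{r}y^r,1\}\bigr]$ at level $K=1/y$. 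Your route is shorter, avoids the gamma/beta apparatus entirely, and yields the explicit constant $c_{r,\alpha}=3^{-1-1/\alpha}$, which in fact does not depend on $r$. A minor remark on your final paragraph: the interpolation $\xi^r\mathbf 1\{\xi\le K\}\le K^{\,r-1-\alpha}\xi^{1+\alpha}$ only needs $r-1-\alpha\ge 0$, so your argument covers $\alpha=r-1$ as well (there the truncation becomes superfluous and your proof essentially reduces to the approach of Lemma~\ref{lem:r-1}); the exclusion you describe is not actually forced by your method.
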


\begin{proof}
 Fix $r \geq 2$, $\alpha \in (0, r-1)$ with $\alpha \notin \mathbb{Z}$ and an offspring distribution $\xi$.  Set $t = \lfloor \alpha \rfloor$ and $\varepsilon = \alpha -t$ so that $\varepsilon \in (0,1)$ and $t$ is an integer with $t \in [0, r-2]$.  Set $M = \max_{x \in [0,1]}G_{\xi}^r(x)$ and fix $y \in [0,1]$ with the property that $g_r^r(1-y) = M$. Such a $y$ can always be found since $G_{\xi}^r(x) \leq g_r^r(x)$ in $[0,1]$, $G_{\xi}^r(1) = g_r^r(1) = 1$ and $g_r^r(x)$ is continuous.  Thus, $M = 1+y+\ldots+y^{r-1}$ and so by equation \eqref{eq:pc_M}
\begin{equation}
\label{eq:p_cBoundx}
 p_c(T_\xi,r) = 1-\frac{1}{M} = \frac{y(1-y^{r-1})}{1-y^r} \geq \frac{r-1}{r} y.
\end{equation}
A lower bound on $p_c(T_\xi,r)$ is given by considering upper and lower bounds for the integral $\int_0^1 \frac{g_r^r(x) - G_\xi^r(x)}{(1-x)^{2+\alpha}}\ dx$.

For the upper bound, using the definition of the beta function, for every $k \geq r$
\begin{align}
\int_0^1 \frac{g_r^r(x) - g_k^r(x)}{(1-x)^{\alpha+2}}\ dx 
		&=\sum_{i=r}^{k-1} \binom{i}{r-1} \int_0^1 x^{i-r} (1-x)^{r-2 - \alpha}\ dx \qquad \text{(by eq. \eqref{eq:gk_rec})}\notag \\
		&=\sum_{i=r}^{k-1} \binom{i}{r-1} B(i-r+1, r-1-\alpha) \notag\\
		&=\sum_{i=r}^{k-1} \frac{i!}{(r-1)!(i-r+1)!} \frac{(i-r)! \Gamma(r-1-\alpha)}{\Gamma(i-\alpha)} \notag\\
		&=\sum_{i=r}^{k-1} \frac{i(i-1)\ldots (i-t) \Gamma(i-t)}{(i-r+1)\Gamma(i-t-\varepsilon)} \notag \\
		& \qquad \cdot \frac{\Gamma(r-1-t-\varepsilon)}{(r-1)(r-2)\ldots (r-1-t)\Gamma(r-1-t)}. \label{eq:int_ub1}
\end{align}
Let $c_1 = c_1(r, \alpha) = \frac{\Gamma(r-1-t-\varepsilon)}{(r-1)(r-2)\ldots (r-1-t)\Gamma(r-1-t)}$.  Note that by inequality \eqref{eq:gammasIneq}, for $t< r-2$, $\frac{\Gamma(r-1-t-\varepsilon)}{\Gamma(r-1-t)} \geq \frac{1}{(r-1-t)^{\varepsilon}}$ and so $c_1 \geq \frac{1}{(r-1)^{t+\varepsilon}} = (r-1)^{-\alpha}$.  On the other hand, if $t=r-2$, then $c_1 = \frac{\Gamma(1-\varepsilon)}{(r-1)!} = \frac{\Gamma(2-\varepsilon)}{(1-\varepsilon)(r-1)!} \geq \frac{1}{2(r-1)!(1-\varepsilon)}$.   

Thus, continuing equation \eqref{eq:int_ub1}, applying inequality \eqref{eq:gammasIneq} again yields
\begin{align*}
\sum_{i=r}^{k-1} &\frac{i(i-1)\ldots (i-t) \Gamma(i-t)}{(i-r+1)\Gamma(i-t-\varepsilon)} \cdot \frac{\Gamma(r-1-t-\varepsilon)}{(r-1)(r-2)\ldots (r-1-t)\Gamma(r-1-t)}\\
	&\leq c_1 \sum_{i=r}^{k-1} \frac{i}{i-r+1} (i-1)(i-2)\ldots(i-t) (i-t)^{\varepsilon}\\
	&\leq r c_1 \sum_{i=r}^{k-1} i^{t+\varepsilon}\\
	&\leq r c_1 k^{1+t+\varepsilon} = r c_1 k^{1+\alpha}.
\end{align*}
Thus, taking expectation over $k$ with respect to $\xi$,
\begin{equation}\label{eq:int_ub_tot}
\int_0^1 \frac{g_r^r(x) - G_{\xi}^r(x)}{(1-x)^{2+\alpha}}\ dx \leq r c_1 \mathbb{E}[\xi^{1+\alpha}].
\end{equation}

Consider now a lower bound on the integral:
\begin{align*}
\int_0^1 &\frac{g_r^r(x) - G_\xi^r(x)}{(1-x)^{2+\alpha}}\ dx
		\geq \int_0^{1-y} \frac{g_r^r(x) - M}{(1-x)^{2+\alpha}}\ dx\\
		&=\int_0^{1-y} -\frac{(M-1)}{(1-x)^{2+\alpha}} + \sum_{i=0}^{r-2} \frac{1}{(1-x)^{1+\alpha-i}}\ dx\\
		&=\left[-\frac{(M-1)}{(\alpha+1)(1-x)^{1+\alpha}} +\sum_{i=0}^{r-2} \frac{1}{(\alpha-i)(1-x)^{\alpha-i}} \right]_0^{1-y}\\
		&=-\frac{(M-1)}{(\alpha+1)}\left(\frac{1}{y^{1+\alpha}}-1 \right) + \sum_{i=0}^t \frac{1}{\alpha-i}\left(\frac{1}{y^{\alpha-i}} - 1\right) + \sum_{i=t+1}^{r-2} \frac{1-y^{i-\alpha}}{i-\alpha}\\
		&=\frac{1}{y^{\alpha}} \left(\frac{M-1}{\alpha+1}\left(\frac{y^{\alpha+1}-1}{y} \right) + \sum_{i=0}^t \frac{y^i - y^\alpha}{\alpha-i} + \sum_{i=t+1}^{r-2} \frac{y^\alpha-y^i}{i-\alpha} \right)\\
		&=\frac{1}{y^\alpha} \left( \frac{(1+y+y^2 + \ldots + y^{r-2})(y^{\alpha+1}-1)}{(\alpha+1)} +\sum_{i=0}^t \frac{y^i - y^\alpha}{\alpha-i} + \sum_{i=t+1}^{r-2} \frac{y^\alpha-y^i}{i-\alpha}\right)\\
		&=\frac{1}{y^\alpha}\Bigg(\frac{-1}{\alpha+1} + \frac{1}{\alpha} + \sum_{i=1}^t \left(\frac{y^i}{\alpha-i} - \frac{y^i}{\alpha+1} \right) +\sum_{i=0}^{r-2} \frac{y^{\alpha+1+i}}{\alpha+1} - \sum_{i=t+1}^{r-2} \frac{y^i}{\alpha+1} - \sum_{i=0}^t \frac{y^{\alpha}}{\alpha-i} \\
		& \qquad + \sum_{i=t+1}^{r-2} \frac{y^\alpha-y^i}{i-\alpha}\Bigg)\\
		&\geq \frac{1}{y^{\alpha}}\left(\frac{1}{\alpha(\alpha+1)} - \frac{y^{t+1}}{\alpha+1} - \sum_{i=0}^t \frac{y^\alpha}{\alpha-i} \right)\\
		&\geq \frac{1}{y^{\alpha}}\left(\frac{1}{\alpha(\alpha+1)} - y^{\alpha} \sum_{i=0}^{t+1} \frac{1}{\alpha+1-i}\right).
\end{align*}

Set $c_2 = c_2(\alpha) = \sum_{i=0}^{t+1} \frac{1}{\alpha+1-i}$ and consider separately two different cases.  For the first, if $y^{\alpha} c_2 \geq \frac{1}{2\alpha(\alpha+1)}$ then since $\mathbb{E}[\xi^{\alpha+1}] \geq 1$,
\[
y^\alpha \geq \frac{1}{2\alpha(\alpha+1)c_2} \geq \frac{1}{2\alpha(\alpha+1)c_2} \mathbb{E}[\xi^{1+\alpha}]^{-1}.
\]
Thus, if $c_2' = \left(\frac{1}{2\alpha(\alpha+1)c_2} \right)^{1/\alpha}$, then $y \geq c_2' \mathbb{E}[\xi^{1+\alpha}]^{-1/\alpha}$.

In the second case, if $y^\alpha < \frac{1}{2\alpha(\alpha+1)c_2}$, then
\begin{equation}\label{eq:int_lb}
\int_0^1 \frac{g_r^r(x) - G_\xi^r(x)}{(1-x)^{2+\alpha}}\ dx \geq \frac{1}{y^\alpha} \frac{1}{2\alpha(\alpha+1)}.
\end{equation}
Combining equation \eqref{eq:int_lb} with equation \eqref{eq:int_ub_tot} yields
\[
y^\alpha \geq \frac{1}{2\alpha(\alpha+1)} \frac{1}{r c_1} \mathbb{E}[\xi^{1+\alpha}]^{-1}
\]
and setting $c_1' = (2\alpha(\alpha+1) r c_1)^{-1/\alpha}$ gives $y \geq c_1' \mathbb{E}[\xi^{1+\alpha}]^{-1/\alpha}$.

Finally, set $c_{r, \alpha} = \frac{r-1}{r} \min\{c_1', c_2'\}$ so that by inequality \eqref{eq:p_cBoundx} we obtain,
\[
p_c(T_\xi, r) \geq \frac{r-1}{r} y \geq c_{r, \alpha} \mathbb{E}[\xi^{1+\alpha}]^{-1/\alpha}.
\]

For every natural number $n \in [1, r-2]$, note that $\lim_{\alpha \to n^-} c_{r, \alpha} >0$ and, by the monotone convergence theorem, there is a constant $c_{r,n}>0$ so that 
\[
p_c(T_\xi,r) \geq c_{r,n} \mathbb{E}[\xi^{1+n}]^{-1/n}.
\]
This completes the proof of the lemma.
\end{proof}

In the above proof, as $\alpha \to (r-1)^-$, $c_1(r, \alpha) \to \infty$ and hence $\lim_{\alpha \to (r-1)^-} c_{r,\alpha} = 0$, so the proof of Lemma \ref{lem:openInterval} does not directly extend to the case $\alpha = r-1$. We deal with this problem in the next lemma. Using a different approach we prove an essentially best possible lower bound on $p_c(T_\xi, r)$ based on the $r$-th moment of the distribution $\xi$. The sharpness of our bound is demonstrated by the $b$-branching tree $T_b$, a Galton--Watson tree with a constant offspring distribution, for which, as a function of $b$, we have $p_c(T_b, r) = (1+o(1))(1-1/r)\left(\frac{(r-1)!}{b^r} \right)^{1/(r-1)}$ (see Lemma 3.7 in \cite{bootsGW}).

\begin{lemma}\label{lem:r-1}
For any $r \geq 2$ and any offspring distribution $\xi$ with $\mathbb{E}[\xi^r] < \infty$,
\[
p_c(T_\xi, r) \geq \left(1 - \frac{1}{r}\right) \left( \frac{(r-1)!}{\mathbb{E}[\xi^r]}\right)^{1/(r-1)}.
\]
\end{lemma}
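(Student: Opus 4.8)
The plan is to use the exact identity \eqref{eq:pc_M}, which reduces the statement to a lower bound on $M := \max_{x \in [0,1]} G_\xi^r(x)$. Writing $a := \left(1 - \tfrac1r\right)\left(\tfrac{(r-1)!}{\mathbb{E}[\xi^r]}\right)^{1/(r-1)}$ for the target quantity, it suffices to prove $M \geq \tfrac1{1-a}$, since then $p_c(T_\xi,r) = 1 - 1/M \geq 1-(1-a) = a$. To bound $M$ from below it is enough to evaluate $G_\xi^r$ at a single, well-chosen point $1-u$ with $u \in (0,1)$. I would deliberately \emph{not} route the argument through the softer estimate $p_c(T_\xi,r) \geq \tfrac{r-1}{r}y$ of \eqref{eq:p_cBoundx}: combined with the estimates below it would spend an extra factor $1-\tfrac1r$, and the sharpness witnessed by the $b$-branching tree leaves no room for that loss.

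Next I would estimate $G_\xi^r(1-u)$ from below. Since $g_r^r(x) = \sum_{i=0}^{r-1}(1-x)^i$, equation \eqref{eq:gk_rec} gives, for every $k \geq r$ (with the empty-sum convention when $k=r$),
\[
g_k^r(1-u) = \sum_{i=0}^{r-1} u^i \;-\; u^r \sum_{i=r}^{k-1} \binom{i}{r-1}(1-u)^{i-r}.
\]
Bounding each factor $(1-u)^{i-r} \leq 1$ and using $\sum_{i=r}^{k-1}\binom{i}{r-1} = \binom{k}{r} - 1$ (a case of the hockey-stick identity) yields
\[
g_k^r(1-u) \geq \sum_{i=0}^{r} u^i \;-\; u^r\binom{k}{r};
\]
the extra summand $u^r$, produced precisely by the $-1$ in the hockey-stick identity, is what makes the final bound tight. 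Since $\binom{k}{r} \leq k^r/r!$, taking expectation over $k\sim\xi$ (legitimate because $\mathbb{E}[\xi^r]<\infty$) gives
\[
G_\xi^r(1-u) \geq \sum_{i=0}^{r} u^i \;-\; \frac{\mathbb{E}[\xi^r]}{r!}\,u^r.
\]

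Finally I would choose $u$ so that the subtracted term equals $u/r$, namely $u := \left(\tfrac{(r-1)!}{\mathbb{E}[\xi^r]}\right)^{1/(r-1)}$; this lies in $(0,1)$ because the standing hypothesis $\xi \geq r$ a.s.\ forces $\mathbb{E}[\xi^r] \geq r^r$. With this $u$, and writing $\beta = \tfrac{r-1}{r}$, $a = \beta u$, one obtains
\[
G_\xi^r(1-u) \geq 1 + \beta u + \sum_{i=2}^{r} u^i = 1 + a + u^2\,\frac{1-u^{r-1}}{1-u},
\]
and since $\tfrac1{1-a} = 1 + a + \tfrac{a^2}{1-a}$, the inequality $G_\xi^r(1-u) \geq \tfrac1{1-a}$ follows once one checks $u^2\tfrac{1-u^{r-1}}{1-u} \geq \tfrac{a^2}{1-a}$, which after clearing denominators becomes the elementary inequality
\[
(1-u^{r-1})\bigl(1-\beta u\bigr) \;\geq\; \beta^2(1-u).
\]
The difference of its two sides equals $(1-\beta)\bigl((1+\beta)-\beta u\bigr) - u^{r-1}(1-\beta u)$, which is non-negative because $(1+\beta)-\beta u \geq 1$ and $1-\beta u \leq 1$, while $u^{r-1} = \tfrac{(r-1)!}{\mathbb{E}[\xi^r]} \leq \tfrac{(r-1)!}{r!} = \tfrac1r = 1-\beta$. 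The only delicate point is that no slack may be spent in the leading term: the lower-order powers $u^2,\dots,u^r$ in the estimate for $G_\xi^r(1-u)$ cannot be discarded, and it is exactly the sharpened hockey-stick bound (the free $+u^r$) together with $\mathbb{E}[\xi^r]\geq r!$ (equivalently $u^{r-1}\leq 1/r$), both consequences of $\xi\geq r$, that pay for them; everything else is routine.
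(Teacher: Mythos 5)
Your proof is correct and follows essentially the same route as the paper's: both evaluate $G_\xi^r$ at $1-t_0$ with $t_0=\left((r-1)!/\mathbb{E}[\xi^r]\right)^{1/(r-1)}$, bound the binomial tail by its first term $\binom{k}{r}t_0^r\le k^rt_0^r/r!$, and conclude via \eqref{eq:pc_M}, obtaining the identical constant. The only difference is bookkeeping: the paper keeps its lower bound in the ratio form $\frac{1-t_0/r}{1-t_0}$, so the final computation is immediate, whereas your additive form (with the hockey-stick $+u^r$ term) needs the short elementary inequality $(1-u^{r-1})(1-\beta u)\ge\beta^2(1-u)$ at the end.
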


\begin{proof}
As in the proof of Lemma 3.7 of \cite{bootsGW} note that for every $k \geq r$ and $t \in [0,1]$,
\begin{align}
	g_k^r(1-t)
		&=\frac{\mathbb{P}(\Bin(k, t) \leq r-1)}{1-t} = \frac{1-\mathbb{P}(\Bin(k,t) \geq r)}{1-t} \notag\\
		&\geq \frac{1-\binom{k}{r} t^r}{1-t} \geq \frac{1-\frac{1}{r!}k^r t^r}{1-t} \label{eq:lb_gk}.
\end{align}

Using the lower bound in inequality \eqref{eq:lb_gk} for the function $G_\xi^r(x)$ yields
\begin{equation*}
G_{\xi}^r(1-t) \geq \sum_{k\geq r} \mathbb{P}(\xi = k) \frac{1-\frac{1}{r!}k^r t^r}{1-t} = \frac{1 - \frac{t^r}{r!}\mathbb{E}[\xi^r]}{1-t}.
\end{equation*}
Evaluating the function $G_\xi^r(1-t)$ at $t = t_0 = \left(\frac{(r-1)!}{\mathbb{E}[\xi^r]} \right)^{1/(r-1)}$ yields
\[
G_\xi^r(1-t_0) \geq \frac{1-\frac{t_0^r}{r!}\mathbb{E}[\xi^r]}{1-t_0} = \frac{1-\frac{1}{r}t_0}{1-t_0}.
\]
Since the maximum value of $G_{\xi}^r(x)$ is at least as big as $G_\xi^r(1-t_0)$, by equation \eqref{eq:pc_M},
\begin{align*}
p_c(T_\xi, r)
	&\geq 1 - \frac{1}{G_\xi^r(1-t_0)} = \frac{G_\xi^r(1-t_0) - 1}{G_\xi^r(1-t_0)} \\
	&=\frac{t_0 \left(1-\frac{1}{r}\right)}{1-t_0} \frac{1-t_0}{1-\frac{1}{r}t_0} \\
	& = \frac{t_0 \left(1-\frac{1}{r}\right)}{1-t_0/r} \geq t_0 \left(1- \frac{1}{r}\right) \\
	&= \left(1-\frac{1}{r}\right) \left(\frac{(r-1)!}{\mathbb{E}[\xi^r]} \right)^{1/(r-1)}.
\end{align*}
This completes the proof of the lemma.
\end{proof}

Theorem \ref{thm:main} now follows immediately from Lemmas \ref{lem:openInterval} and \ref{lem:r-1}.

It is not possible to extend a result of the form of Theorem \ref{thm:main} to $\alpha > r-1$, as demonstrated, again, by the regular $b$-branching tree. For every $\alpha$, the $(1+\alpha)$-th moment of this distribution is $b^{1+\alpha}$ and the critical probability for the constant distribution is $p_c(T_b, r) = (1+o(1))(1-1/r)\left(\frac{(r-1)!}{b^r} \right)^{1/(r-1)}$.

As we already noted, Lemma \ref{lem:r-1} is asymptotically sharp, giving the best possible constant in Theorem \ref{thm:main} for any $r \geq 2$ and $\alpha = r-1$. We now show that for $\alpha \in (0, r-1)$, Theorem \ref{thm:main} is also best possible, up to constants.  In \cite{bootsGW}, it was shown that for every $r \geq 2$, there is a constant $C_r$ such that if $b \geq (r-1)\log(4er)$, then there is an offspring distribution $\eta_{r,b}$ with $\mathbb{E}[\eta_{r,b}] = b$ and $p_c(T_{\eta_{r,b}}, r) \leq C_r e^{-\frac{b}{r-1}}$.  It was shown that there are $k_1 = k_1(r,b) \leq e(r-2)e^{\frac{b}{r-1}}-1$ and $A, \lambda \in (0,1)$ so that the distribution $\eta_{r,b}$ is given by
\begin{equation*}
\mathbb{P}(\eta_{r,b} = k) = 
	\begin{cases}
		\frac{r-1}{k(k-1)}	&r< k \leq k_1, k \neq 2r+1\\
		\frac{1}{r} + \lambda A	&k = r\\
		\frac{r-1}{(2r+1)2r} + (1-\lambda)A	&k = 2r+1.
	\end{cases}
\end{equation*}

For any $\alpha >0$, the $(\alpha+1)$-th moment of $\eta_{r, b}$ is bounded from above as follows,
\begin{align*}
\mathbb{E}[\eta_{r,b}^{\alpha+1}]
	&=\sum_{k=r}^{k_1} \frac{(r-1)}{k(k-1)}k^{\alpha+1} + \lambda A r^{\alpha + 1} + (1-\lambda) A (2r+1)^{\alpha+1}\\
	&\leq 2(r-1)\sum_{k=r}^{k_1} k^{\alpha-1} + 2(2r+1)^{\alpha+1}\\
	&\leq 2(r-1) \left ( \int_r^{k_1+1} x^{\alpha-1}\ dx + r^{\alpha-1} \right ) + 2(2r+1)^{\alpha+1}\\
	&\leq \frac{2(r-1)}{\alpha} (k_1+1)^{\alpha} + 3(2r+1)^{\alpha+1}\\
	&\leq \frac{2(r-1)}{\alpha} \left(e(r-2) e^{\frac{b}{r-1}}\right)^{\alpha} + 3(2r+1)^{\alpha+1},
\end{align*}
where the $r^{\alpha-1}$ term makes the inequality hold for $\alpha < 1$. In particular, there is a constant $C_{r, \alpha}$ so that for $b$ sufficiently large, $\mathbb{E}[\eta_{r,b}^{1+\alpha}]^{1/\alpha} \leq C_{r, \alpha} e^{\frac{b}{r-1}}$.  Thus, for some positive constant $C_{r, \alpha}'$, 
\[
p_c(T_{\eta_{r,b}}, r) \leq C_r e^{-\frac{b}{r-1}} \leq C_{r, \alpha}' \mathbb{E}[\eta_{r,b}^{1+\alpha}]^{-1/\alpha}.
\]
Hence the bounds in Theorem \ref{thm:main} are sharp up to a constant that does not depend on the offspring distribution $\xi$.

\end{document}